\definecolor{nicered}{rgb}{0.6, 0, 0.1}
\definecolor{niceblue}{rgb}{0.06, 0.3, 0.57}
\definecolor{nicegreen}{rgb}{0.0, 0.51, 0.5}
\setlist[enumerate]{leftmargin=2em,label=\textup{(\roman*)}}
\newcommand\ubar[1]{%
  \underaccent{\bar}{#1}}
\crefname{equation}{Eq.}{Eqs.}
\crefname{theorem}{Theorem}{Theorems} 
\crefname{lemma}{Lemma}{Lemmas}
\crefname{corollary}{Corollary}{Corollaries}
\crefname{proposition}{Proposition}{Propositions}
\crefname{definition}{Definition}{Definitions}
\crefname{remark}{Remark}{Remarks}
\crefname{example}{Example}{Examples}
\crefname{notation}{Notation}{Notations}
\crefname{setup}{Setup}{Setup}
\crefname{question}{Question}{Question}
\crefname{convention}{Convention}{Conventions}
\newtheorem{theorem}{Theorem}[section]
\newtheorem{corollary}[theorem]{Corollary}
\newtheorem{proposition}[theorem]{Proposition}
\theoremstyle{definition}
\newtheorem{definition}[theorem]{Definition}
\newtheorem{remark}[theorem]{Remark}
\numberwithin{equation}{section}
\newcommand{\fa}{\mathfrak{a}}
\newcommand{\CC}{\mathbb{C}}
\newcommand{\cJ}{\mathcal{J}}
\newcommand{\cO}{\mathcal{O}}
\newcommand{\cR}{\mathcal{R}}
\newcommand{\ZZ}{\mathbb{Z}}
\newcommand{\act}{\mathbin{\vcenter{\hbox{\scalebox{0.7}{$\bullet$}}}}}
\newcommand{\End}{\operatorname{End}}
\NewDocumentCommand \pd { m o }
{
\IfNoValueTF {#2}
{ \partial_{#1} }
{ \partial_{#1} \act #2 }
}
\NewDocumentCommand \fpd { m o }
{
\IfNoValueTF {#2}
{ \frac{\partial\ }{\partial #1 } }
{ \frac{\partial #2 }{\partial #1 } }
}
\newcommand{\R}{\mathbb{R}}
\newcommand{\J}{\mathcal{J}}
\NewDocumentCommand \seq { o m }
{
\IfNoValueTF {#1}
{ \ubar{#2} }
{ {#2}_1,\ldots,{#2}_{#1} }
}
\NewDocumentCommand \pt { o m }
{
\IfNoValueTF {#1}
{ #2 }
{ ({#2}_1,\ldots,{#2}_{#1}) }
}
\newcommand{\lambdab}{\pmb{\lambda}}
\newcommand{\lambdapb}{\pmb{\lambda'}}
\newcommand{\fab}{\pmb{\mathfrak{a}}}
\newcommand{\details}[2][]{} 
\title{A note on Bernstein-Sato ideals}
\author[J. \`Alvarez Montaner ]{Josep \`Alvarez Montaner}
\address{Departament de Matem\`atiques  and  Institut de Matem\`atiques de la UPC-BarcelonaTech (IMTech)\\  Universitat Polit\`ecnica de Catalunya } \email{josep.alvarez@upc.edu}
\thanks{Partially supported by grants  PID2019-103849GB-I00 (AEI/10.13039/501100011033) and 2017SGR-932 (AGAUR)}
\begin{document}

\begin{abstract}
We define the Bernstein-Sato ideal associated to a tuple of ideals and we relate it to the jumping points of the corresponding mixed multiplier ideals.

\end{abstract}

\maketitle

\setcounter{tocdepth}{1}


\section{Introduction}

Let $R$  be either the polynomial ring $\CC[x_1,\dots, x_n]$ over the complex numbers or the ring $\CC\{x_1,\dots, x_n\}$ of convergent power series in the neighbourhood of the origin, or any other point.  
The \emph{multiplier ideals} of a element$f$ or an ideal $\fa$ in $R$ are a family of nested ideals that play a prominent role in birational geometry (see Lazarsfeld's book \cite{Laz2004}). Associated to these ideals we have a set of invariants, the \emph{jumping numbers}, that are intimately related to other invariants of singularities. For instance, Ein, Lazarsfeld, Smith and Varolin \cite{ELSV2004} and independently Budur and Saito \cite{BudurSaito05}, proved that the negatives of the jumping numbers of $f$ in the interval $(0,1)$ are roots of the \emph{Bernstein-Sato polynomial} of $f$.  
Budur, Musta\c{t}\u{a} and Saito \cite{BMS2006a} extended the classical theory of Bernstein-Sato polynomials to the case of ideals and also proved that the jumping numbers of an ideal $\fa$ in the interval $(0,1)$ are roots of the \emph{Bernstein-Sato polynomial} of $\fa$.

\vskip 2mm

There is a natural extension of the theory of multiplier ideals to the context of tuples of germs $F:=f_1,\dots , f_\ell$ or tuples of ideals $\fab:=\fa_1,\dots,\fa_\ell $  in $R$. The main differences that we encounter in this setting is that, whereas the multiplier ideals come with the set of associated jumping numbers, the mixed multiplier ideals come with a set of \emph{jumping walls}. On the other side of the story we have the notion of \emph{Bernstein-Sato ideal} associated to a tuple of germs $F$ given by Sabbah \cite{Sabbah_ideal}. In the case of a tuple of plane curves,  Cassou-Nogu\`es and Libgober \cite{CNL11} related the Bernstein-Sato ideal with  the so-called  {\it faces of quasi-adjunction} which is a set of invariants equivalent to the jumping walls.

\vskip 2mm

The aim of this short note is to fill out the theory introducing  the notion of Bernstein-Sato ideal associated to a tuple ideals $\fab:=\fa_1,\dots,\fa_\ell $.  To such purpose we are going to follow the approach given by  Musta\c{t}\u{a} \cite{Mustata2019} where he relates the Bernstein-Sato polynomial of a single ideal $\fa=\big(f_{1},\dots , f_{r}\big)$  to the reduced Bernstein-Sato polynomial of $g= f_{1} y_{1} +\cdots  +  f_{ r} y_{r}$, where the $y_{j}$'s are new variables.
Finally, we show in Theorem \ref{main} that the negative of the jumping points of the mixed multiplier ideals of the tuple $\fab$ that are in the open ball of radius one centered at the origin belong to the zero locus of the Bernstein-Sato ideal of $\fab$.

\vskip 2mm

The theory of Bernstein-Sato polynomials and its relations with other invariants such as the multiplier ideals is vast and rich. In this note we tried to introduce only the essential concepts that we needed so we recommend those who are not that familiar with these topics to take a look at the 
surveys of Budur \cite{BudurNotes}, Granger \cite{Granger2010} or Jeffries, N\'u\~nez-Betancourt and the author \cite{BS_survey} for further insight. 

\vskip 2mm

{ {\bf Acknowledgements:} We would like to thank Guillem Blanco, Jack Jeffries and Luis N\'u\~nez-Betancourt for many helpful conversations regarding this work.

\section{Bernstein-Sato ideal of a tuple of ideals}\label{BS_ideals}


Let $R$  be either $\CC[x_1,\dots, x_n]$ or $\CC\{x_1,\dots, x_n\}$ and denote $\mathfrak{m}=(x_1,\dots, x_n)$ the (homogeneous)  maximal ideal.
Let $\fab:=\fa_1,\dots,\fa_\ell $ be a tuple of ideals   in $R$. For each ideal described by a set of generators   $\fa_i=\big(f_{i,1},\dots , f_{i ,r_i}\big)$
we consider $g_i= f_{i,1} y_{i,1} +\cdots  +  f_{i , r_i} y_{i , r_i}$ where the $y_{i,j}$'s are new variables. In particular we get a tuple  $G:= g_1,\dots, g_\ell$ in the ring  $A$ that will be either $\CC[x_1,\dots, x_n, y_{1,1},\dots, y_{\ell,r_\ell}]$ or $ \CC\{x_1,\dots, x_n, y_{1,1},\dots, y_{\ell,r_\ell}\}$. In the sequel, $d:=n+r_1 + \cdots + r_\ell$ will denote the number of variables in $A$.

\vskip 2mm

Associated to $R$ or $A$ we have the corresponding ring of  differential operators $$D_{R}=R \langle \partial_1, \dots , \partial_n  \rangle \hskip 2mm , \hskip 3mm  D_{A}=A \langle \partial_1, \dots , \partial_n , \partial_{1,1}, \dots ,\partial_{\ell,r_\ell} \rangle$$ where $\partial_i$ (resp. $\partial_{i,j}$) is the partial derivative with respect to $x_i$ (resp. $y_{i,j}$). That is, $D_{R}$ (resp. $D_{A}$) is the 
$\CC$-subalgebra of $\End_\CC(R)$ (resp. $\End_\CC(A)$)  generated by the ring and the partial derivatives.  
 
\begin{definition}
The \emph{Bernstein-Sato ideal} of the tuple $G$ is the ideal $B_G \subseteq \CC[s_1,\dots, s_\ell]$ generated by all the polynomials $b(s_1,\dots, s_\ell)$ satisfying the \emph{Bernstein-Sato functional equation}  
	\[ \delta(s_1,\dots, s_\ell) g_1^{s_1+1} \cdots g_\ell^{s_\ell+1}= b(s_1,\dots, s_\ell) g_1^{s_1} \cdots g_\ell^{s_\ell} \]
	where $\delta(s_1,\dots, s_\ell)\in D_A[s_1,\dots, s_\ell]$  and $b(s_1,\dots, s_\ell)\in \CC[s_1,\dots, s_\ell]$.
\end{definition}

Sabbah \cite{Sabbah_ideal} proved that $B_G \neq 0$ in the convergent power series case.  The proof of $B_G \neq 0$ in the polynomial ring case is completely analogous to the classical case of a single element. Indeed, it is enough to consider the local case.

\begin{remark}
Brian\c{c}on and Maisonobe showed in \cite{BriMai02} that
$$B_G^{\CC[x]} = \bigcap_{p \in \CC^d}  B_G^{\CC\{x-p\}}$$
where $B_G^{\CC[x]}$ denotes the Bernstein-Sato ideal of a tuple $G$ over the polynomial ring and $B_G^{\CC\{x-p\}}$ is the Bernstein-Sato ideal of $G$ in the convergent power series around  a point $p\in \CC^d$. 
\end{remark}

Now, since the  $g_i$ are pairwise without common factors, we have 
$$B_G \subseteq \Big( (s_1+1)\cdots (s_\ell+1)\Big).$$ 
(see \cite{May, BriMay} for details).


\begin{definition}
The \emph{reduced Bernstein-Sato ideal} of the tuple $G$ is the ideal $\widetilde{B}_G \subseteq \CC[s_1,\dots, s_\ell]$ generated by the polynomials $$\frac{b(s_1,\dots, s_\ell)}{(s_1+1)\cdots (s_\ell+1)}, $$ with $b(s_1,\dots, s_\ell) \in B_G$.

\end{definition}

Following the approach given by Musta\c{t}\u{a} \cite{Mustata2019} for the case of a single ideal, we consider the following:

\begin{definition}
Let $\fab =\fa_1,\dots,\fa_\ell $  be a tuple of ideals in $\cO_{X,O}$ and let $G:= g_1,\dots, g_\ell$ be its associated tuple of hypersurfaces.
We define the \emph{Bernstein-Sato ideal} of $\fab$ as 
$$ B_{\fab}:= \widetilde{B}_G \subseteq \CC[s_1,\dots, s_\ell]$$
\end{definition}

Our next result shows that $ B_{\fab}$ does not depend on the generators of the ideals $\fa_1,\dots,\fa_\ell $ and thus it is an invariant of the tuple $\fab$.

\begin{theorem}
Let $\fab:=\fa_1,\dots,\fa_\ell $ be a tuple of ideals  and, for each ideal, consider two different sets of generators 
$\fa_i=\big(f_{i,1},\dots , f_{i ,r_i}\big)$  and $\fa_i=\big(f'_{i,1},\dots , f'_{i ,s_i}\big)$. Consider the tuple $G=g_1, \dots , g_\ell$ with
$g_i= f_{i,1} y_{i,1} +\cdots  +  f_{i , r_i} y_{i , r_i}$ and the tuple $G'=g'_1, \dots , g'_\ell$
with $g'_i= f'_{i,1} y'_{i,1} +\cdots  +  f'_{i , s_i} y'_{i , s_i}$. 
Then $\widetilde{B}_G = \widetilde{B}_{G'}.$
\end{theorem}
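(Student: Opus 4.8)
The plan is to reduce to the case where the two sets of generators differ by a single ``elementary move'', and to show invariance under each such move. Since any two finite generating sets of a given ideal $\fa_i$ can be connected by a finite chain of the following operations and their inverses, it suffices to treat each operation separately: (a) adding a redundant generator $f_{i,r_i+1} = \sum_j c_{j} f_{i,j}$ with $c_j \in R$; (b) permuting the generators; (c) replacing $f_{i,j}$ by $u\, f_{i,j}$ for a unit $u$ (only relevant in the convergent power series case, and in fact subsumed by the first operation). Permutation is handled by the obvious relabelling of the $y_{i,j}$, so the heart of the matter is operation (a): comparing $g_i = \sum_{j=1}^{r_i} f_{i,j} y_{i,j}$ with $\widetilde{g}_i = \sum_{j=1}^{r_i+1} f_{i,j} y_{i,j}$, where $f_{i,r_i+1} = \sum_j c_j f_{i,j}$, leaving all other $g_k$ ($k \neq i$) unchanged. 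Write $A$ for the ring attached to $G$ and $\widetilde{A} = A[y_{i,r_i+1}]$ for the one attached to $\widetilde{G}$.

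The key observation is the linear change of coordinates on $\widetilde{A}$ given by $y_{i,j} \mapsto y_{i,j} + c_j\, y_{i,r_i+1}$ for $1 \le j \le r_i$, and fixing all other variables. This is an $R$-algebra automorphism $\varphi$ of $\widetilde{A}$ under which $\widetilde{g}_i \mapsto \sum_{j=1}^{r_i} f_{i,j}(y_{i,j} + c_j y_{i,r_i+1}) + (\sum_j c_j f_{i,j}) y_{i,r_i+1} = g_i + 2(\sum_j c_j f_{i,j})y_{i,r_i+1}$ — so I would instead take $\varphi\colon y_{i,j}\mapsto y_{i,j} - c_j y_{i,r_i+1}$ applied to the \emph{first} $r_i$ slots of the \emph{non-augmented} picture, i.e.\ work with the inclusion $A \hookrightarrow \widetilde{A}$ and the automorphism of $\widetilde A$ sending $y_{i,j}\mapsto y_{i,j}-c_j y_{i,r_i+1}$, under which $g_i \in A \subseteq \widetilde A$ is sent to $\widetilde g_i$. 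Such a change of variables extends to an automorphism of $D_{\widetilde A}[\seq{s}]$ (it is linear, so it transports the $\partial$'s by the inverse transpose), and it therefore carries a Bernstein–Sato functional equation for $G$, viewed in $\widetilde A$, to one for $\widetilde G$ with the \emph{same} polynomial $b(\seq s)$. This gives $B_{G} \subseteq B_{\widetilde G}$ once we know that passing from $A$ to $\widetilde A$ does not shrink the Bernstein–Sato ideal, i.e.\ that adjoining the free variable $y_{i,r_i+1}$ leaves $B_G$ unchanged; this last point is standard (a functional equation over $A$ is one over $\widetilde A$, and conversely one may specialize $y_{i,r_i+1}\mapsto 0$, or invoke that $B$ is unchanged under base change by a polynomial/power-series variable on which nothing depends).

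For the reverse inclusion $B_{\widetilde G}\subseteq B_G$, I would argue that $\widetilde g_i$ and $g_i$ generate, up to the automorphism $\varphi$ above, the same data, so symmetry of the argument already gives equality at the level of the full Bernstein–Sato ideals $B_G = B_{\widetilde G}$ inside $\CC[\seq s]$. Dividing by $(s_1+1)\cdots(s_\ell+1)$ then yields $\widetilde B_G = \widetilde B_{\widetilde G}$, which is the claim. Finally one chains these equalities along a path of elementary moves connecting $\{f_{i,j}\}_j$ to $\{f'_{i,j}\}_j$ for each $i$ independently (the moves on different indices $i$ commute, since they involve disjoint sets of $y$-variables), obtaining $\widetilde B_G = \widetilde B_{G'}$.

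The main obstacle I anticipate is the bookkeeping in the step ``adjoining a free variable does not change the Bernstein–Sato ideal'': one direction is trivial, but for the other (showing a functional equation over $\widetilde A$ descends to one over $A$) the cleanest route is to apply the $\CC$-algebra specialization $y_{i,r_i+1}\mapsto 0$ to the equation, checking that it is compatible with the $D$-module action (it is, because $\partial_{i,r_i+1}$ maps into the ideal $(y_{i,r_i+1})$ after we discard the term it produces — more precisely, set $y_{i,r_i+1}=0$ \emph{and} drop any operator involving $\partial_{i,r_i+1}$, using that $\widetilde g_i|_{y_{i,r_i+1}=0} = g_i$). I would make sure this specialization argument is stated carefully, since it is the one place where the differential-operator structure, rather than mere ring theory, is genuinely used. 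An alternative that sidesteps it entirely is to note that in Musta\c{t}\u{a}'s framework the relevant object is really the $D$-module generated by $\prod_i g_i^{s_i}$ and that the automorphism $\varphi$ together with faithful flatness of $A\to\widetilde A$ identifies the two such $D$-modules; I would present whichever of these is shorter in the final write-up.
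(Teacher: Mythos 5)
Your argument is essentially the paper's: both reduce to adjoining a single redundant generator $f_{i,r_i+1}=\sum_j c_j f_{i,j}$ and then apply an $R$-linear change of the $y$-variables that makes the augmented polynomial independent of the new variable. You are, if anything, more careful than the paper on one point: after the change of variables the tuple still lives in the larger ring $\widetilde A = A[y_{i,r_i+1}]$, so one must additionally know that adjoining a variable on which none of the $g_k$ depend leaves $B_G$ unchanged. The paper folds this silently into the phrase ``Bernstein--Sato ideals do not change by change of variables,'' while you explicitly flag it and sketch the specialization (coefficient-of-$y^0$) argument that justifies it. One small sign slip to fix in the write-up: the automorphism of $\widetilde A$ sending $g_i$ to $\widetilde g_i$ is $y_{i,j}\mapsto y_{i,j}+c_j\,y_{i,r_i+1}$; the map $y_{i,j}\mapsto y_{i,j}-c_j\,y_{i,r_i+1}$ is its inverse and sends $\widetilde g_i$ to $g_i$, not the other way around. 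Your spurious factor of $2$ came from applying the ``$+$'' substitution to $\widetilde g_i$ rather than to $g_i$, and the subsequent ``correction'' overcorrected. This is harmless --- the change of variables is invertible, so either direction gives $B_{G}^{\widetilde A}=B_{\widetilde G}^{\widetilde A}$ --- but the formula and the stated effect should match.
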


\begin{proof}
Without loss of generality we may assume  that, for each ideal $\fa_i$, the set of generators $f'_{i,1},\dots , f'_{i ,s_i}$ is just  $f_{i,1},\dots , f_{i ,r_i}, h_i$ for a given $h_i\in \fa_i$.
Let $z_1,\ldots, z_{r_i}$ such that $h_i=z_1 f_{i,1}+\cdots+z_{r_i} f_{i,r_i}$. Then we have
 \begin{align*}
g_i' & = f_{i,1} y'_{i,1} +\cdots  +  f_{i , r_i} y'_{i , r_i} + h_i y'_{i , r_i +1}\\ &=
f_{i,1} y'_{i,1} +\cdots  +  f_{i , r_i} y'_{i , r_i} + (z_1 f_{i,1}+\cdots+z_{r_i} f_{i,r_i})  y'_{i , r_i +1}\\
& = f_1(y'_{i , 1}+z_1 y'_{i , r_i +1})+\cdots+f_\ell (y'_{i , r_i}+z_{r_i} y'_{i , r_i +1}).
\end{align*}
After a change of variables $y_{i,j}\mapsto y'_{i,j}+z_j y'_{i , r_i +1}$, this polynomial becomes $g_i$.
Since Bernstein-Sato ideals do not change by change of variables, we conclude that  $B_{G}=B_{G'}$ and the result follows.
\end{proof}

\section{Mixed multiplier ideals}\label{MMI}

Let  $\pi : X'  \longrightarrow X$ be a common log-resolution of a tuple of ideals $\fab=\fa_1,\dots,\fa_\ell $  in $R$.
Namely, $\pi$ is a birational morphism  such that 
\begin{enumerate}
\item[$\cdot$] $X'$ is smooth,
\item[$\cdot$]  $\fa_i\cdot\cO_{X'} = \cO_{X'}\left(-F_i\right)$ for some effective Cartier divisor $F_i$,  $i=1,\dots , \ell$,  
\item[$\cdot$] $\sum_{i=1}^\ell F_i+E$ is a divisor with simple normal crossings where $E = Exc\left(\pi\right)$ is the exceptional locus.
\end{enumerate}

The divisors $F_i=\sum_j e_{i,j} E_j$ are integral
divisors in $X'$ which can be decomposed  into their  {\it
exceptional} and {\it affine} part according to the support, i.e.
$F_i=F_i^{\rm exc} + F_i^{\rm aff}$ where
\[F_i^{\rm exc}= \sum_{j=1}^s e_{i,j} E_j \hskip 5mm {\rm and} \hskip 5mm  F_i^{\rm aff}= \sum_{j=s+1}^t e_{i,j} E_j.\]
Whenever $\fa_i$ is an $\mathfrak{m}$-primary ideal, the divisor $F_i$ is just supported on the exceptional
locus. i.e. $F_i=F_i^{\rm exc}$. We will also consider  the {\it relative canonical divisor} 
\[ K_{\pi}=\sum_{i=1}^s k_j E_j \]  which is a divisor in $X'$ supported on the
exceptional locus $E$ defined by the Jacobian determinant of the morphism $\pi$.

\begin{definition}\label{mmi}
The {\it mixed multiplier ideal} associated to a tuple  $\fab=\fa_1,\dots,\fa_\ell $ of ideals  in $R$ and a point
$\mathbf{\lambdab}=(\lambda_1,\dots,\lambda_\ell) \in \R_{\geqslant0}^\ell$ is defined
as 
\begin{equation*} 
\J(\fab^{\mathbf{\lambda}}):=\J(\fa_1^{\lambda_1}\cdots\fa_\ell^{\lambda_\ell})
= \pi_*\cO_{X'}\left(\left\lceil K_{\pi} - \lambda_1 F_1- \cdots -
\lambda_\ell F_\ell \right\rceil\right)
\end{equation*}
\end{definition}

In the classical case of a single ideal we have the notion of \emph{jumping numbers} associated to the sequence of multiplier ideals. 
The corresponding notion  in the context of mixed multiplier ideals is more involved.

\vskip 2mm

\begin{definition}\label{region}
Let $\fab=\fa_1,\dots,\fa_\ell $  be a tuple of ideals  in $R$. Then, for each $\lambdab\in \R_{\geqslant0}^\ell$, we
define:

\begin{itemize}
\item[$\cdot$] The {\it region} of $\lambdab $: \hskip 20mm
$\mathcal{R}_{\fab}\left(\lambdab\right)=\left\{\lambdapb \in
\R_{\geqslant 0}^\ell \hskip 2mm \left|
 \hskip 2mm \J(\fab^{\lambdapb})\supseteq\J(\fab^{\lambdab})\right\}\,\right. $.

\item[$\cdot$]  The {\it constancy region} of $\lambdab$: \hskip 3mm
$\mathcal{C}_{\fab}\left(\lambdab\right)=\left\{\lambdapb  \in
\R_{\geqslant 0}^\ell \hskip 2mm \left| \hskip 2mm
\J(\fab^{\lambdapb})=\J(\fab^{\lambdab})\right\}\,\right. $.
\end{itemize}

\end{definition}

The boundaries of these regions is where we have a strict inclusion of ideals. Therefore we may define:

\begin{definition}
Let $\fab=\fa_1,\dots,\fa_\ell $  be a tuple of ideals  in $R$. The {\it jumping wall} associated to $\lambdab \in
\R^\ell_{\geqslant 0}$ is the boundary of the region
$\cR_{\fab}(\lambdab)$. 
\end{definition}

In particular, we will be interested in the points of these jumping walls. In the sequel, $B_{\varepsilon}(\lambdab)$ stands for the open ball of radius $\varepsilon$ centered at 
a point $\lambdab \in \R^\ell$.

\begin{definition}\label{def:JP}
Let $\fab=\fa_1,\dots,\fa_\ell $  be a tuple of ideals  in $R$.  We say that $\lambdab \in \R^\ell_{\geqslant 0}$ is a
\emph{jumping point} of $\fab$ if
$\J(\fab^{\lambdapb})\varsupsetneq\J(\fab^{\lambdab})$
for all $\lambdapb \in \{\lambdab - \R_{\geqslant0}^\ell\} \cap
B_{\varepsilon}(\lambdab)$ and $\varepsilon >0$ small enough.
\end{definition}

From the definition of mixed multiplier ideals  we have that the
jumping points $\lambdab \in \R^\ell_{\geqslant 0}$   must lie on
hyperplanes of the form $H_j: \hskip 1mm e_{1,j} z_1+ \cdots +  e_{\ell,j} z_\ell= k_j +\nu_j $  for $j=1,\dots,s$ and $\nu_j \in \ZZ_{>0}$.

For $\lambda \in (0,1)$ we have $\J(\fa^{\lambda})= \J(g_\alpha^{\lambda})$ where $\fa=\big(f_{1},\dots , f_{r}\big)$ is a single ideal in $R$ and  $g_\alpha= \alpha_1 f_{1} +\cdots  + \alpha_r f_{ r} \in R$ with $\alpha_i \in \mathbb{C}$ is a general element (see \cite[Prop. 9.2.28]{Laz2004}).  As a consequence of a more general result of Musta\c{t}\u{a} and Popa given in \cite[Theorem 2.5]{MPIdeals} we also have  a relation between $\J(\fa^{\lambda})$ and  the multiplier ideal of the  associated hypersurface $g= f_{1} y_{1} +\cdots  +  f_{ r} y_{r}$ in  $A$.  

\vskip 2mm

\begin{definition} 
  Let  $\cJ =\big( Q_1(y), \dots, Q_s(y) \big)$ be an ideal in $A$. Then, ${\rm Coeff}(\cJ) \subseteq R$ is the ideal 
  generated by $$\{ Q_1(\alpha), \dots, Q_s(\alpha) \hskip 2mm | \hskip 2mm \alpha \in \mathbb{C}^r\}$$
  \end{definition}

  The result of Musta\c{t}\u{a} and Popa in the form that we need  is the following

\begin{proposition} \label{MP_mult}
Let $\fa=\big(f_{1},\dots , f_{r}\big)$ be an ideal in $R$ and let $g= f_{1} y_{1} +\cdots  +  f_{ r} y_{r}$ be the associated hypersurface in  $A$. Then, for any $\lambda \in \mathbb{Q} \cap (0,1)$ we have 
$$\J(\fa^{\lambda}) = {\rm Coeff}\big(\J(g^{\lambda})\big)$$
In particular,  the set of jumping numbers in the interval $(0,1)$ of $\fa$ and $g$ coincide.
\end{proposition}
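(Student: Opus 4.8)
The plan is to deduce the equality from a log-resolution computation for $g$ obtained by base-changing a log resolution of $\fa$; this is essentially the mechanism underlying \cite[Theorem 2.5]{MPIdeals}, which one may also simply quote. Fix a log resolution $\pi\colon X'\to X$ of $\fa$, so that $\fa\cdot\cO_{X'}=\cO_{X'}(-F)$ with $F+\mathrm{Exc}(\pi)$ a simple normal crossings divisor, and let $K_{\pi}$ be the relative canonical divisor. Locally on $X'$ we may write $f_j=u_j\,t$, where $t$ is a local equation for $F$ and $u_1,\dots,u_r$ generate the unit ideal of $\cO_{X'}$.

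Next I would base change to $\Pi:=\pi\times\mathrm{id}\colon X'\times\mathbb{A}^r\to X\times\mathbb{A}^r=\Spec A$, which is proper and birational with $X'\times\mathbb{A}^r$ smooth. From $f_j=u_j t$ one gets $\Pi^{*}g=t\cdot\bigl(\sum_j u_j y_j\bigr)$ locally, hence $\operatorname{div}(\Pi^{*}g)=\Pi^{*}F+H$ with $H=V\bigl(\sum_j u_j y_j\bigr)$. Since some $u_j$ is a unit near each point, $H$ is smooth and, in a local coordinate system adapted to $\Pi^{*}F$ and $\mathrm{Exc}(\Pi)$, a coordinate hyperplane; thus $H+\Pi^{*}F+\mathrm{Exc}(\Pi)$ is simple normal crossings, i.e.\ $\Pi$ is a log resolution of $g$. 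Since $K_{\Pi}$ is the pullback of $K_{\pi}$ and $0<\lambda<1$ forces the coefficient $\lceil-\lambda\rceil=0$ of $H$ to drop out of the round-up, we obtain
\begin{align*}
\J(g^{\lambda}) &= \Pi_{*}\cO_{X'\times\mathbb{A}^r}\bigl(\lceil K_{\pi}-\lambda\Pi^{*}F-\lambda H\rceil\bigr) \\
&= \Pi_{*}\cO_{X'\times\mathbb{A}^r}\bigl(\lceil K_{\pi}-\lambda\Pi^{*}F\rceil\bigr) = \J(\fa^{\lambda})\cdot A,
\end{align*}
the last equality because this sheaf is pulled back along the projection $X'\times\mathbb{A}^r\to X'$.

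Granting $\J(g^{\lambda})=\J(\fa^{\lambda})\cdot A$, the proposition follows at once: applying ${\rm Coeff}$ to an ideal extended from $R$ returns that ideal, so ${\rm Coeff}\bigl(\J(g^{\lambda})\bigr)=\J(\fa^{\lambda})$; and since $R\to A$ is faithfully flat, $\J(g^{\lambda})\subsetneq\J(g^{\lambda'})$ holds exactly when $\J(\fa^{\lambda})\subsetneq\J(\fa^{\lambda'})$, so (jumping numbers being rational) the jumping numbers of $\fa$ and of $g$ in $(0,1)$ coincide. Alternatively, one can identify ${\rm Coeff}\bigl(\J(g^{\lambda})\bigr)$ with $\J(\fa^{\lambda})$ by combining \cite[Theorem 2.5]{MPIdeals} with the equality $\J(\fa^{\lambda})=\J(g_{\alpha}^{\lambda})$ for general $\alpha$ (\cite[Prop. 9.2.28]{Laz2004}), applied to the restriction of $g$ to a general fibre $\{y=\alpha\}$.

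The only genuinely delicate point is the simple normal crossings check for $H$ against the components pulled back from $X'$: in a chart where $u_j$ is a unit, writing $H$ as the graph $y_j=-u_j^{-1}\sum_{k\neq j}u_k y_k$ turns it into a coordinate hyperplane and makes transversality transparent. Everything else is formal. Note that the hypothesis $\lambda<1$ is precisely what keeps the new component $H$ from entering the round-up; for $\lambda\geq 1$ the variables $y_j$ genuinely appear in $\J(g^{\lambda})$ and only the ${\rm Coeff}$ formulation persists.
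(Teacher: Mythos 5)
The paper gives no proof of Proposition~\ref{MP_mult}: it is quoted as a special case of \cite[Theorem 2.5]{MPIdeals}, so there is no internal argument to compare against. Your self-contained proof via the base-changed log resolution $\Pi=\pi\times\mathrm{id}\colon X'\times\mathbb{A}^r\to X\times\mathbb{A}^r$ is correct, and it in fact yields the slightly stronger identity $\J(g^{\lambda}) = \J(\fa^{\lambda})\cdot A$ for $\lambda\in(0,1)$, from which the stated ${\rm Coeff}$ equality and the agreement of jumping numbers in $(0,1)$ follow by evaluation and by faithful flatness of $R\to A$, respectively. The decisive checks --- that $K_{\Pi}$ is the pullback of $K_{\pi}$ along the first projection, that $\operatorname{div}(\Pi^{*}g)=\Pi^{*}F+H$ with $H=V\bigl(\sum_j u_j y_j\bigr)$ smooth because some $u_j$ is a unit near every point of $X'$ and transverse to the components pulled back from $X'$, and that $\lceil-\lambda\rceil=0$ makes $H$ drop out of the roundup exactly for $0<\lambda<1$ --- are all in order. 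One harmless slip: in your display you write $K_{\pi}$ for a divisor on $X'\times\mathbb{A}^r$ where its pullback $K_{\Pi}$ is meant. The alternative you sketch (combining \cite[Theorem 2.5]{MPIdeals} with \cite[Prop.~9.2.28]{Laz2004} via a general linear section) is closer in spirit to the paper's treatment, which is simply to cite Musta\c{t}\u{a}--Popa; your direct log-resolution argument has the merit of making the role of the hypothesis $\lambda<1$ completely transparent.
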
  

\vskip 2mm

The mixed multiplier ideals version of this result follows immediately from the following observation.

\begin{remark} \label{ray}
Consider a ray through the origin $L: (0,\dots, 0) + \mu(\alpha_1, \dots, \alpha_\ell)$ where the $\alpha_i$'s are positive integers. 
\begin{figure}[ht!!]
  \begin{center}
  \begin{minipage}{0.6\textwidth}
  \medskip
  \includegraphics[width=.7\textwidth]{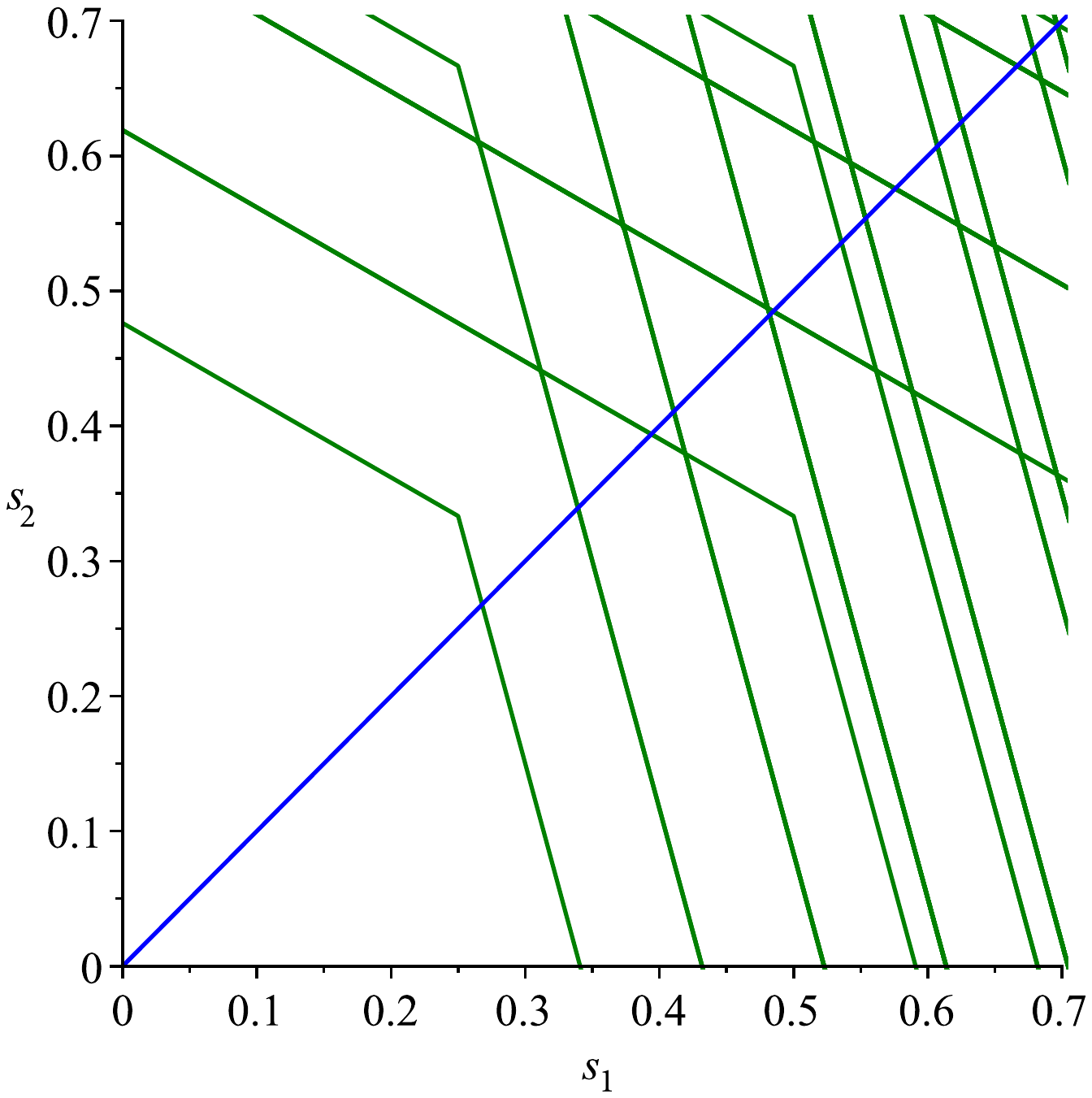}
  \label{fig:JW}
  \end{minipage}
  \end{center}
  \end{figure}
  
\noindent  Then, the jumping points of a tuple $\fab=\fa_1,\dots,\fa_\ell $  lying on $L$ are the jumping numbers of the ideal $ \fa_1^{\alpha_1} \cdots \fa_\ell^{\alpha_\ell} $
  \end{remark}

  \begin{corollary}
Let $\fab =\fa_1,\dots,\fa_\ell $  be a tuple of ideals in $R$, $G:= g_1,\dots, g_\ell$  its associated tuple of hypersurfaces and
consider $\lambdab \in \mathbb{Q}^\ell_{\geqslant 0}$ with Euclidean norm $\parallel \lambdab  \parallel <1$. Then, $\lambdab$ is a jumping point of $\fab$ if and only if it is a jumping point of $G$. 
  \end{corollary}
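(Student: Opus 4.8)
The plan is to use Remark \ref{ray} to trade the statement — which concerns jumping points of the $\ell$-tuples $\fab$ and $G$ — for a statement about jumping numbers of single ideals, and then to apply Proposition \ref{MP_mult}. The case $\lambdab=0$ is trivial, since it is a jumping point of neither tuple (in Definition \ref{def:JP} the set $\{0-\R^\ell_{\geqslant0}\}\cap\R^\ell_{\geqslant0}$ is just $\{0\}$), so I assume $\lambdab\neq0$ and write $\lambdab=\mu\,\alphan$ with $\mu\in\mathbb{Q}_{>0}$ and $\alphan=(\alpha_1,\dots,\alpha_\ell)$ a primitive tuple of non-negative integers. Discarding the coordinates $i$ with $\lambda_i=0$ (which are forced to stay $0$ in Definition \ref{def:JP}, so one may pass to the subtuple of the $\fa_i$ with $\lambda_i\neq0$ without changing either side of the equivalence) I may assume every $\alpha_i>0$. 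The hypothesis $\parallel\lambdab\parallel<1$ is then used twice: it forces $\lambda_i=\mu\alpha_i<1$ for every $i$, and $\mu\leqslant\mu\parallel\alphan\parallel=\parallel\lambdab\parallel<1$.

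Applying Remark \ref{ray} to the ray through the origin in direction $\alphan$ — first to $\fab$, then to the tuple of principal ideals $(g_1),\dots,(g_\ell)$ — reduces everything to the following: $\mu$ is a jumping number of the ideal $\mathfrak{b}:=\fa_1^{\alpha_1}\cdots\fa_\ell^{\alpha_\ell}$ if and only if it is a jumping number of the hypersurface $h:=g_1^{\alpha_1}\cdots g_\ell^{\alpha_\ell}\in A$ (here one uses $(g_1)^{\alpha_1}\cdots(g_\ell)^{\alpha_\ell}=(h)$). Since $\mu\in(0,1)$, this is the situation of Proposition \ref{MP_mult}, once one checks that $h$ is an admissible ``associated hypersurface'' of $\mathfrak{b}$. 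This is elementary: for $\mathfrak{b}$ take the redundant generating set of all ordered products $f_{1,j_{1,1}}\cdots f_{\ell,j_{\ell,\alpha_\ell}}$, attach to the product indexed by a tuple the new-variable monomial $y_{1,j_{1,1}}\cdots y_{\ell,j_{\ell,\alpha_\ell}}$, and note the identity $\sum_{\text{tuples}}\big(\textstyle\prod f\big)\big(\textstyle\prod y\big)=\prod_{i=1}^{\ell}\big(\sum_j f_{i,j}y_{i,j}\big)^{\alpha_i}=h$. Proposition \ref{MP_mult}, in the general form of Musta\c{t}\u{a}--Popa \cite[Theorem 2.5]{MPIdeals}, then yields ${\rm Coeff}\big(\J(h^c)\big)=\J(\mathfrak{b}^c)$ for $c\in(0,1)$ and the coincidence of the jumping numbers of $h$ and of $\mathfrak{b}$ in $(0,1)$; applying this at $c=\mu$ finishes the proof.

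I expect the real obstacle to be making this last paragraph rigorous: Proposition \ref{MP_mult} is stated for the \emph{linear} element $\sum_j f_jy_j$, whereas $h$ is a product of such, so one must verify the Musta\c{t}\u{a}--Popa argument survives the replacement. The identity ${\rm Coeff}(\J(h^c))=\J(\mathfrak{b}^c)$ for $c$ with all $c\alpha_i<1$ (in particular for $c$ near $\mu$) can be obtained by a squeeze: from $(h)\subseteq\mathfrak{b}A$ one gets $\J(h^c)\subseteq\J(\mathfrak{b}^c)A$ by base change along the smooth projection, hence ${\rm Coeff}(\J(h^c))\subseteq\J(\mathfrak{b}^c)$; and restricting $\J(h^c)$ to a general fibre $y=\eta$ gives, by the generic restriction theorem together with the product version of \cite[Prop. 9.2.28]{Laz2004} (valid precisely because each $c\alpha_i<1$), the ideal $\J(g_{1,\eta}^{c\alpha_1}\cdots g_{\ell,\eta}^{c\alpha_\ell})=\J(\mathfrak{b}^c)$ with $g_{i,\eta}:=\sum_j f_{i,j}\eta_{i,j}$, hence ${\rm Coeff}(\J(h^c))\supseteq\J(\mathfrak{b}^c)$. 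The subtler half is the \emph{bijection} of jumping numbers, i.e. that ${\rm Coeff}$ does not collapse a strict inclusion $\J(h^c)\subsetneq\J(h^{c-\varepsilon})$; the cleanest route is to build a common log resolution of $(h)$ from a common log resolution $\pi$ of $\fab$ — the divisors created when further resolving $h$ carry coefficients that are irrelevant to jumping numbers in $(0,1)$ — so that the jumping numbers of $h$ and of $\mathfrak{b}$ in $(0,1)$ are both read off from the same numerical data $(e_{i,j},k_j)$, exactly as in the single-element case of \cite{Mustata2019,MPIdeals}.
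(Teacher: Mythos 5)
Your route is the same as the paper's in outline---Remark \ref{ray} to trade the tuple statement for a statement about jumping numbers of a single ideal, then Proposition \ref{MP_mult}---but you have correctly identified that the paper's one-line appeal to Proposition \ref{MP_mult} hides a real step. After Remark \ref{ray}, what you must compare are the jumping numbers of $\mathfrak{b}=\fa_1^{\alpha_1}\cdots\fa_\ell^{\alpha_\ell}$ (an ideal in $R$) and of $h=g_1^{\alpha_1}\cdots g_\ell^{\alpha_\ell}$ (a single element of $A$). But $h$ is \emph{not} the associated hypersurface of $\mathfrak{b}$ in the sense of Proposition \ref{MP_mult}: that would require a sum $\sum_\beta m_\beta z_\beta$ with \emph{fresh, independent} variables $z_\beta$, one per generator of $\mathfrak{b}$, whereas $h$ is of degree $|\alphan|_1$ in the $y$-variables and reuses the $y_{i,j}$ coming from the separate $\fa_i$. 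The paper elides this mismatch; you make it explicit. Your squeeze argument is sound: $(h)\subseteq\mathfrak{b}A$ together with smooth pullback of multiplier ideals gives one inclusion, and generic restriction plus the $\ell$-factor version of \cite[Prop.~9.2.28]{Laz2004} (which, as you note, requires all $\lambda_i=\mu\alpha_i<1$, guaranteed here by $\parallel\lambdab\parallel<1$) gives the other; the common-log-resolution remark at the end is the right way to upgrade ``same ideals'' to ``same jumping numbers,'' i.e.\ to rule out ${\rm Coeff}$ collapsing a strict inclusion. Your handling of the boundary cases ($\lambdab=0$ and $\lambda_i=0$ for some $i$, which are needed before Remark \ref{ray}, stated for strictly positive $\alpha_i$, can be invoked) is also correct and is omitted in the paper. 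In short: same strategy, but your version is more careful and actually closes a gap the paper leaves open.
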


\begin{proof}
After Remark \ref{ray} we may assume that we have a single ideal $\fa=\big(f_{1},\dots , f_{r}\big)$ so its associated hypersurface is $g= f_{1} y_{1} +\cdots  +  f_{ r} y_{r}$. The result then follows from \ref{MP_mult}.
\end{proof}

In order to prove the main result of this section we will need the analytic definition of mixed multiplier ideal associated to a tuple $G= g_1,\dots, g_\ell$.

\begin{definition}
Let $G= g_1,\dots, g_\ell$ be a tuple in \(  A\). Let ${\overline{B}_{\epsilon}(O)} $ be a closed ball of radius $\varepsilon$ and center the origin $O\in \CC^d$. The mixed multiplier ideal (at the origin $O$) of \( G \) associated with  \( \lambdab \in \mathbb{Q}^\ell_{>0} \) is
\begin{equation*}
\mathcal{J}(g_1^{\lambda_1} \cdots g_\ell^{\lambda_\ell})_O= \big\{ h \in A \ \big|\ \exists\, \epsilon \ll 1\ \textnormal{such that}\ \int_{\overline{B}_{\epsilon}(O)} \frac{|h|^2}{|g_1|^{2\lambda_1}\cdots |g_\ell|^{2\lambda_\ell}} dx dy d\bar{x} d\bar{y} < \infty \big\}.
\end{equation*}

\end{definition}

\begin{remark}
As in the case of Bernstein-Sato ideals it is enough to consider this local case since we 
have $$\J(g_1^{\lambda_1} \cdots g_\ell^{\lambda_\ell}) = \cap_{p\in \CC^d} \J(g_1^{\lambda_1} \cdots g_\ell^{\lambda_\ell})_p.$$
If it is clear from the context we will omit the subscript referring to the point.
\end{remark}

\begin{theorem} \label{main}
Let $\fab=\fa_1,\dots,\fa_\ell $  be a tuple of ideals  in $R$. Let $\lambdab \in \mathbb{Q}^\ell_{\geqslant 0}$ be a
{jumping point} of $\fab$ with  Euclidean norm $\parallel \lambdab  \parallel <1$. Then $-\lambdab \in Z(B_{\fab})$.
\end{theorem}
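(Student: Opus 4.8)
The plan is to reduce, using the results established above, to a statement about the tuple of hypersurfaces $G=g_1,\dots,g_\ell$, and then to carry out the analytic argument of Ein--Lazarsfeld--Smith--Varolin and Budur--Saito in $\ell$ variables, with the Bernstein-Sato ideal $B_G$ playing the role of the $b$-function of a single germ. First I would reduce to proving that $-\lambdab\in Z(B_G)$: since $\parallel\lambdab\parallel<1$ we have $\lambda_i<1$ for every $i$, hence $(1-\lambda_1)\cdots(1-\lambda_\ell)\neq0$, and because $B_G\subseteq\big((s_1+1)\cdots(s_\ell+1)\big)$, writing $b=(s_1+1)\cdots(s_\ell+1)\,\tilde b$ with $\tilde b\in\widetilde B_G$ for $b\in B_G$ and evaluating at $-\lambdab$ shows that $-\lambdab\in Z(B_G)$ if and only if $-\lambdab\in Z(\widetilde B_G)=Z(B_{\fab})$. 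By the Corollary above $\lambdab$ is a jumping point of $G$; we may assume $\lambdab\neq0$ and, for notational simplicity, that all $\lambda_i>0$ (otherwise one argues in the same way on the coordinate subspace containing the ray $\mathbb R_{\geq0}\lambdab$). Write $\lambdab=\mu\alphan$ with $\alphan=(\alpha_1,\dots,\alpha_\ell)$, $\alpha_i$ positive integers, and $\mu\in\mathbb Q\cap(0,1)$. By Remark \ref{ray}, $\mu$ is a jumping number at $O$ of the hypersurface $g_1^{\alpha_1}\cdots g_\ell^{\alpha_\ell}$, so by local constancy of multiplier ideals between consecutive jumping numbers there are $h\in A$ and $\varepsilon_0,\delta>0$ with
\[
\int_{\overline{B}_{\varepsilon_0}(O)}\frac{|h|^2}{|g_1|^{2(\mu-t)\alpha_1}\cdots|g_\ell|^{2(\mu-t)\alpha_\ell}}\,dx\,dy\,d\bar x\,d\bar y<\infty\qquad(0<t<\delta),
\]
whereas the same integral with $t=0$ diverges.

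Shrinking $\varepsilon_0$ so that $|g_i|<1$ on $\overline{B}_{\varepsilon_0}(O)$, consider
\[
Z_h(s_1,\dots,s_\ell)=\int_{\overline{B}_{\varepsilon_0}(O)}|h|^2\,|g_1|^{2s_1}\cdots|g_\ell|^{2s_\ell}\,dx\,dy\,d\bar x\,d\bar y,
\]
which converges and is holomorphic wherever it converges, in particular for $\operatorname{Re}(s_i)>0$. Fixing $b\in B_G$ with functional equation $\delta(s)\,g_1^{s_1+1}\cdots g_\ell^{s_\ell+1}=b(s)\,g_1^{s_1}\cdots g_\ell^{s_\ell}$ and iterating it $N$ times (each step shifts all the $s_i$ by $1$) gives some $\delta_N(s)\in D_A[s_1,\dots,s_\ell]$ with $\prod_{k=0}^{N-1}b(s+k\mathbf 1)\cdot g_1^{s_1}\cdots g_\ell^{s_\ell}=\delta_N(s)\big(g_1^{s_1+N}\cdots g_\ell^{s_\ell+N}\big)$, where $\mathbf 1=(1,\dots,1)$. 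Multiplying $Z_h$ by $\prod_{k=0}^{N-1}b(s+k\mathbf 1)$ and integrating by parts --- the formal adjoint of the holomorphic operator $\delta_N(s)$ passes through the antiholomorphic factors $\overline{g_i}^{\,s_i}$ and $\overline h$ --- produces an integral whose integrand is, locally uniformly in $s$, $O\big(|g_1|^{2(\operatorname{Re}(s_1)+N/2)}\cdots|g_\ell|^{2(\operatorname{Re}(s_\ell)+N/2)}\big)$; already for $N=2$ these exponents are positive in a neighbourhood of $s=-\lambdab$ (because $\lambda_i<1$), so the integrand is bounded on the bounded domain $\overline{B}_{\varepsilon_0}(O)$ and $b(s)\,b(s+\mathbf 1)\,Z_h(s)$ is holomorphic near $-\lambdab$. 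Letting $N\to\infty$ one sees that $Z_h$ extends to a meromorphic function on $\mathbb C^\ell$ whose polar locus is contained in $\bigcup_{k\geq0}\big(Z(B_G)-k\mathbf 1\big)$ --- the usual consequence of the functional equation (cf. Sabbah \cite{Sabbah_ideal}).

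On the other hand, along the real ray $s=-\lambdab+t\alphan$ the integrand of $Z_h$ equals $|h|^2\prod_i|g_i|^{-2(\mu-t)\alpha_i}$, which increases monotonically as $t\downarrow0$; hence, by monotone convergence, $Z_h(-\lambdab+t\alphan)$ is finite for $0<t<\delta$ and tends to $+\infty$ as $t\downarrow0$. Consequently $Z_h$ is not holomorphic at $-\lambdab$, so $-\lambdab+k\mathbf 1\in Z(B_G)$ for some integer $k\geq0$. Finally, by the structure theory of Bernstein-Sato ideals (going back to Sabbah \cite{Sabbah_ideal}), $Z(B_G)$ is contained in a finite union of hyperplanes $\{a_1s_1+\dots+a_\ell s_\ell+c=0\}$ with $a_i\in\mathbb Q_{\geq0}$ not all zero and $c\in\mathbb Q_{>0}$; in particular $Z(B_G)$ does not meet $\{s\in\mathbb C^\ell:\operatorname{Re}(s_i)\geq0\ \forall i\}$. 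If $k\geq1$, the point $-\lambdab+k\mathbf 1$ has all coordinates $k-\lambda_i>0$ (again because $\lambda_i<1$), which is impossible. Hence $k=0$, i.e. $-\lambdab\in Z(B_G)$, and therefore $-\lambdab\in Z(B_{\fab})$, as desired.

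The crux is the last step --- forcing $k=0$: this is precisely where the hypothesis $\parallel\lambdab\parallel<1$ enters in an essential way, and where one uses the ``negativity'' of the zero locus of a Bernstein-Sato ideal, the several-variable analogue of Kashiwara's theorem that the roots of a $b$-function are negative. The remaining ingredients --- the meromorphic continuation and the integration by parts, the existence of a single test function $h$ valid along an entire sub-ray, and the passage between the analytic and algebraic descriptions of mixed multiplier ideals --- are standard, but do require the usual care with the convergence of the integrals.
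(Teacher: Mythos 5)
Your proof is correct, but it takes a genuinely different route from the paper's. The paper applies the functional equation \emph{once}, with both the conjugate and the adjoint operator $\bar\delta^*\delta^*$, so that the integrand on the right contains $\prod_i|g_i|^{2(s_i+1)}$; it then evaluates at $s=-\lambdab'$ for $\lambdab'$ running up a ray toward $\lambdab$, notes that the right-hand side stays uniformly bounded because the exponents $2(1-\lambda'_i)$ remain positive (this is exactly where $\parallel\lambdab\parallel<1$ is used), while the integral $\int |h|^2\prod_i|g_i|^{-2\lambda'_i}$ on the left blows up by monotone convergence, and concludes $b^2(-\lambdab')\to 0$, hence $b(-\lambdab)=0$ by continuity --- directly, with no input about the global structure of $Z(B_G)$. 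You instead iterate a one-sided functional equation ($\delta^*$ only, which forces $N\ge2$ so that $2\operatorname{Re}(s_i)+N>0$ near $-\lambdab$) to obtain a meromorphic continuation of $Z_h$, deduce from the pole at $-\lambdab$ that $-\lambdab+k(1,\ldots,1)\in Z(B_G)$ for some $k\ge0$, and then invoke the rationality/negativity structure theorem for Bernstein--Sato ideals to exclude $k\ge1$. That works, but it imports a nontrivial structure theorem (Sabbah, Gyoja, Maisonobe) that the paper's local limiting argument avoids entirely; the hypothesis $\parallel\lambdab\parallel<1$ enters your argument twice (both to control the exponents and to rule out $k\ge1$), whereas in the paper it enters only to control the exponents. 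On the positive side, your write-up is more careful about two points the paper leaves implicit: the reduction from $Z(B_G)$ to $Z(\widetilde{B}_G)=Z(B_{\fab})$ via $(1-\lambda_1)\cdots(1-\lambda_\ell)\neq0$, and the existence of a \emph{single} test function $h$ valid along an entire sub-ray approaching $\lambdab$, which you justify via local constancy of the mixed multiplier ideals between jumping walls rather than taking it for granted.
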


\begin{proof}
Let $\lambdab \in \mathbb{Q}^\ell_{\geqslant 0}$ be a
{jumping point} of the tuple $G= g_1,\dots, g_\ell$ associated to $\fab$  with $\parallel \lambdab  \parallel <1$ and take $h\in \mathcal{J}(f^{\lambdab'}) \smallsetminus \mathcal{J}(f^{\lambdab})$ with $\lambdab' \in  \{\lambdab - \R_{\geqslant0}^\ell\} \cap
B_{\varepsilon}(\lambdab)$
 for $\varepsilon > 0$ small enough.  Therefore \[ \frac{|h|^2}{|g_1|^{2\lambda'_1}\cdots |g_\ell|^{2\lambda'_\ell}} \] is integrable but when we take the limit $\varepsilon \rightarrow 0$   we end up with \[ \frac{|h|^2}{|g_1|^{2\lambda_1}\cdots |g_\ell|^{2\lambda_\ell}} \] that is not integrable. Set $d=n+r_1 + \cdots + r_\ell$ and consider the complex zeta function
\begin{equation*} \label{int-eq}
 \int_{\mathbb{C}^d}|g_1|^{2s_1}  \cdots |g_\ell|^{2s_\ell} \varphi(x, y,\bar{x}, \bar{y}) dx dy d\bar{x} d\bar{y},
\end{equation*}
where $s_1,\dots, s_\ell$ are indeterminate variables  and \( \varphi(x, \bar{x}) \in C^\infty_c(\mathbb{C}^{d}) \) is a \emph{test function}, i.e. an infinitely many times differentiable function with compact support. Moreover $\varphi$ has holomorphic and antiholomorphic part. For any $b(s_1,\dots, s_\ell) \in B_G$  we have a Bernstein-Sato functional equation
	\[ \delta(s_1,\dots, s_\ell) g_1^{s_1+1} \cdots g_\ell^{s_\ell+1}= b(s_1,\dots, s_\ell) g_1^{s_1} \cdots g_\ell^{s_\ell} \]
Therefore
\begin{align*}
b^2(s_1,\dots, s_\ell) & \int_{\mathbb{C}^d}  \varphi(x, y,\bar{x}, \bar{y}) |g_1|^{2s_1}  \cdots |g_\ell|^{2s_\ell}dx dy d\bar{x} d\bar{y} = \\ = &  
 \int_{\mathbb{C}^d} \bar{\delta}^*\delta^*(s_1,\dots, s_\ell)\big(\varphi(x, y,\bar{x}, \bar{y})\big) |g_1|^{2(s_1+1)}  \cdots |g_\ell|^{2(s_\ell+1)} dx dy d\bar{x} d\bar{y}.
 \end{align*}
where $\bar{\delta}^*$ and $\delta^*$ denote the \emph{conjugate} and the \emph{adjoint} differential operators associated to $\delta$.
Notice that $|h|^2\varphi(x, \bar{x})$ is still a test function so
\begin{align*}
b^2(s_1,\dots, s_\ell) & \int_{\mathbb{C}^d}  |h|^2 \varphi(x, y,\bar{x}, \bar{y}) |g_1|^{2s_1}  \cdots |g_\ell|^{2s_\ell}dx dy d\bar{x} d\bar{y} = \\ = &  
 \int_{\mathbb{C}^d} \bar{\delta}^*\delta^*(s_1,\dots, s_\ell)\big(|h|^2\varphi(x, y,\bar{x}, \bar{y})\big) |g_1|^{2(s_1+1)}  \cdots |g_\ell|^{2(s_\ell+1)} dx dy d\bar{x} d\bar{y}.
 \end{align*}

\noindent Now we take a test function $\varphi$ which is zero outside the ball $\overline{B}_{\epsilon}(O)$ and identically one on a smaller ball  $\overline{B}_{\epsilon'}(O)\subseteq \overline{B}_{\epsilon}(O)$ and thus we get
\begin{align*}
b^2(s_1,\dots, s_\ell) & \int_{\overline{B}_{\epsilon'}(O)} |h|^2 |g_1|^{2s_1}  \cdots |g_\ell|^{2s_\ell} dx dy d\bar{x} d\bar{y}  = \\ = & 
 \int_{\overline{B}_{\epsilon'}(p)} \bar{\delta}^*\delta^*(s_1,\dots, s_\ell))\big(|h|^2\big)|g_1|^{2(s_1+1)}  \cdots |g_\ell|^{2(s_\ell +1)} dx dy d\bar{x} d\bar{y}.
 \end{align*}
Taking $s=-(\lambda'_1, \dots , \lambda'_\ell)$ we get
\begin{align*}
b^2(-\lambda'_1, \dots , -\lambda'_\ell)  &\int_{\overline{B}_{\epsilon'}(O)} \frac{|h|^2} {|g_1|^{2\lambda'_1}  \cdots |g_\ell|^{2\lambda'_\ell}} dx dy d\bar{x} d\bar{y}  =\\ = & 
 \int_{\overline{B}_{\epsilon'}(O)} \bar{\delta}^*\delta^*(-\lambda'_1, \dots , -\lambda'_\ell)\big(|h|^2\big) |g_1|^{2(1-\lambda'_1)}  \cdots |g_\ell|^{2(1-\lambda'_\ell)} dx dy d\bar{x} d\bar{y} \end{align*}
but the right-hand side is uniformly bounded for all $\varepsilon >0$. Thus we have 
\[
b^2(-\lambda'_1, \dots , -\lambda'_\ell)  \int_{\overline{B}_{\epsilon'}(O)} \frac{|h|^2} {|g_1|^{2\lambda'_1}  \cdots |g_\ell|^{2\lambda'_\ell}} dx dy d\bar{x} d\bar{y}   \leq  M < \infty\]
for some positive number $M$ that depends on $h$. Then, by the monotone convergence theorem we have to have \(b^2(-\lambda_1, \dots , -\lambda_\ell) = 0 \) and thus $-\lambdab \in Z(\tilde{B}_G)=Z(B_\fa)$.
\end{proof}

\bibliographystyle{alpha}
\bibliography{refs}

\begin{thebibliography}{{\`A}JNB21}

\bibitem[{\`A}JNB21]{BS_survey}
Josep {{\`A}lvarez Montaner}, Jack Jeffries, and Luis N{\'u}{\~n}ez-Betancourt.
\newblock Bernstein-{S}ato polynomials in commutative algebra.
\newblock 2021.
\newblock Preprint, \href{https://arxiv.org/abs/2106.08830}{arXiv:2106.08830}.

\bibitem[BM99]{BriMay}
Jo\"el Brian\c{c}on and H\'el\`ene Maynadier.
\newblock \'{E}quations fonctionnelles g\'{e}n\'{e}ralis\'{e}es:
  transversalit\'{e} et principalit\'{e} de l'id\'{e}al de {B}ernstein-{S}ato.
\newblock {\em J. Math. Kyoto Univ.}, 39(2):215--232, 1999.

\bibitem[BM02]{BriMai02}
Jo\"el Brian\c{c}on and Philippe Maisonobe.
\newblock Bernstein-{S}ato ideals associated to polynomials {I}, 2002.
\newblock Unpublished notes.

\bibitem[BMS06]{BMS2006a}
Nero Budur, Mircea Musta{\c t}{\u a}, and Morihiko Saito.
\newblock Bernstein-{S}ato polynomials of arbitrary varieties.
\newblock {\em Compos. Math.}, 142(3):779--797, 2006.

\bibitem[BS05]{BudurSaito05}
Nero Budur and Morihiko Saito.
\newblock Multiplier ideals, {$V$}-filtration, and spectrum.
\newblock {\em J. Algebraic Geom.}, 14(2):269--282, 2005.

\bibitem[Bud15]{BudurNotes}
Nero Budur.
\newblock Bernstein-{S}ato polynomials.
\newblock Lecture notes for the summer school \emph{Multiplier Ideals, Test
  Ideals, and Bernstein-Sato Polynomials}, at UPC Barcelona, available at
  \href{https://perswww.kuleuven.be/~u0089821/Barcelona/BarcelonaNotes.pdf}{https://perswww.kuleuven.be/~u0089821/Barcelona/BarcelonaNotes.pdf},
  2015.

\bibitem[CNL11]{CNL11}
Pierrette Cassou-Nogu\`es and Anatoly Libgober.
\newblock Multivariable {H}odge theoretical invariants of germs of plane
  curves.
\newblock {\em J. Knot Theory Ramifications}, 20(6):787--805, 2011.

\bibitem[ELSV04]{ELSV2004}
Lawrence Ein, Robert Lazarsfeld, Karen~E. Smith, and Dror Varolin.
\newblock Jumping coefficients of multiplier ideals.
\newblock {\em Duke Math. J.}, 123(3):469--506, 2004.

\bibitem[Gra10]{Granger2010}
Michel Granger.
\newblock Bernstein-{S}ato polynomials and functional equations.
\newblock In {\em Algebraic approach to differential equations}, pages
  225--291. World Sci. Publ., Hackensack, NJ, 2010.

\bibitem[Laz04]{Laz2004}
R.~Lazarsfeld.
\newblock {\em Positivity in {A}lgebraic {G}eometry {II}}.
\newblock Springer-Verlag, Berlin, 2004.

\bibitem[May97]{May}
H\'{e}l\`ene Maynadier.
\newblock Polyn\^{o}mes de {B}ernstein-{S}ato associ\'{e}s \`a une intersection
  compl\`ete quasi-homog\`ene \`a singularit\'{e} isol\'{e}e.
\newblock {\em Bull. Soc. Math. France}, 125(4):547--571, 1997.

\bibitem[MP20]{MPIdeals}
Mirce\u{a} Musta\c{t}\u{a} and Mihnea Popa.
\newblock Hodge ideals and minimal exponents of ideals.
\newblock {\em Rev. Roumaine Math. Pures Appl.}, 65(3):327--354, 2020.

\bibitem[Mus19]{Mustata2019}
Mircea Musta\c{t}\u{a}.
\newblock Bernstein-{S}ato polynomials for general ideals vs principal ideals.
\newblock {\em Preprint, \href{https://arxiv.org/abs/1906.03086
  }{arXiv:1906.03086}}, 2019.

\bibitem[Sab87]{Sabbah_ideal}
Claude Sabbah.
\newblock Proximit\'{e} \'{e}vanescente. {II}. \'{E}quations fonctionnelles
  pour plusieurs fonctions analytiques.
\newblock {\em Compositio Math.}, 64(2):213--241, 1987.

\end{thebibliography}

\end{document}